\documentclass[a4paper]{article}

\usepackage{
amsmath,
amsthm,
amscd,
amssymb,
}
\usepackage{mathtools}
\usepackage{comment}
\usepackage{tikz}
\usetikzlibrary{positioning,arrows,calc}
\usepackage{xspace}

\setcounter{tocdepth}{3}
\usepackage{graphicx}

\usepackage{url}

\theoremstyle{plain}
\newtheorem{lemma}{Lemma}
\newtheorem{definition}{Definition}

\newtheorem{theorem}{Theorem}

\newtheorem{question}{Question}

\newtheoremstyle{derp}
{3pt}
{3pt}
{}
{}
{\upshape}
{:}
{.5em}
{}
\theoremstyle{derp}
\newtheorem{example}{Example}

\newcommand{\R}{\mathbb{R}}

\newcommand{\Z}{\mathbb{Z}}

\newcommand{\N}{\mathbb{N}}

\newcommand{\supp}{\mathrm{supp}}

\title{Trees are nilrigid}

\author{
Ville Salo \\
vosalo@utu.fi
}

\begin{document}
\maketitle

\begin{abstract}
We study cellular automata on the unoriented $k$-regular tree $T_k$, i.e. continuous maps acting on vertex-labelings of $T_k$ which commute with all automorphisms of the tree. We prove that every CA that is asymptotically nilpotent, meaning every configuration converges to the same constant configuration, is nilpotent, meaning each configuration is mapped to that configuration after finite time. We call group actions nilrigid when their cellular automata have this property, following Salo and T\"orm\"a. In this terminology, the full action of the automorphism group of the $k$-regular tree is nilrigid. We do not know whether there is a nilrigid automorphism group action on $T_k$ that is simply transitive on vertices.
\end{abstract}

\section{Definitions}

\begin{definition}
Let $T$ be a set, $G \curvearrowright T$ a group action. A \emph{cellular automaton (CA)} on $(G,T)$ is a continuous function $f : A^T \to A^T$, where $A$ is a finite set called the \emph{alphabet} and $A^T$ has the product topology, such that $f \circ g = g \circ f$ for all $g \in G$, where the action $G \curvearrowright A^T$ is defined by $g x_h = x_{g^{-1} h}$. 
\end{definition}

For $k \geq 2$, write $T_k$ for the infinite (rootless) leafless undirected $k$-regular tree, and $G_k$ for its automorphism group (group of bijections on vertices that preserve the edges), a discrete\footnote{This group admits a natural locally compact topology, but this plays no role here.} uncountable group.

One set-theoretic implementation of $T_k$ is obtained by taking the vertices $V$ to be finite words over the alphabet $\{0,1,...,k-1\}$, including the empty word, such that $k-1$ can only occur as the first symbol, i.e.\ the regular language $V = \{\epsilon\} \cup \{0,1,...,k-1\} \{0,1,...,k-2\}^*$, and undirected edges $E$ the sets $\{w,wa\}$ where $w, wa \in V$ and $a \in \{0,1,...,k-1\}$. Another implementation is the undirected Cayley graph of the free product $\Z_2 * \Z_2 * \cdots \Z_2$ of $k$ copies of the two-element group $\Z_2$, with generators those of the $\Z_2$.

Slightly abusing notation, we write $A^{T_k}$ for the set of all functions $\alpha : V \to A$ where $V$ is the set of vertices of $T_k$. By similar abuse of notation, cellular automata on the $k$-ary tree are cellular automata $f : A^{T_k} \to A^{T_k}$ on the action $G_k \curvearrowright T_k$. 

\begin{definition}
Let $X$ be a topological space and $f : X \to X$ a function. We say $f$ is \emph{asymptotically nilpotent (towards $0 \in X$)} if $f^n(x) \rightarrow 0$ for all $x \in X$. We say $f$ is \emph{uniformly asymptotically nilpotent} if it is asymptotically nilpotent, and the convergence is uniform over $X$. We say $f$ is \emph{nilpotent (towards $0 \in X$)} if there exists $n \in \N$ such that $f^n(x) = 0$ for all $n \in \N$. A point $x \in X$ is \emph{mortal (towards $0 \in X$)} if there exists $n \in \N$ such that $f^n(x) = 0$.
\end{definition}

\begin{definition}
We say a group action $G \curvearrowright T$ is \emph{nilrigid} if for every finite alphabet $A$, every asymptotically nilpotent CA $f : A^T \to A^T$ is uniformly asymptotically nilpotent. A group $G$ is \emph{nilrigid} if the regular action $G \curvearrowright G$ defined by $g \cdot g' = gg'$ is nilrigid.
\end{definition}

For $a \in A$ and $T$ a set, write $a^T$ for the unique element of $\{a\}^T$. When studying cellular automata on transitive group actions, any type of nilpotency of $f : A^T \to A^T$ defines the point $0 \in X$ in a shift-invariant way, thus necessarily $0 = 0^T$ for some $0 \in A$. It is easy to see that if $G \curvearrowright T$ is transitive then a CA $f : A^T \to A^T$ is nilpotent if and only if it is uniformly asymptotically nilpotent.

For $x \in A^\Z$ or $x \in A^\N$, define $\sigma(x)$ by the formula $\sigma(x)_i = x_{i+1}$. If a symbol $0 \in A$ is clear from context, for any set $T$ and $x,y \in A^T$ such that $\forall t \in T: 0 \in \{x_t, y_t\}$, write $x + y$ for the unique configuration satisfying $\forall t \in T: \{0, (x + y)_t\} = \{x_t, y_t\}$. (This is the cellwise sum of $x$ and $y$ with respect to any abelian group structure on $A$.)

\section{Lemmas}

We prove some basic lemmas about asymptotic nilpotency.

\begin{lemma}
\label{lem:SyndeticallyClose}
Let $X$ be a compact metrizable space and $f : X \to X$ a function. If $f$ is asymptotically nilpotent towards $0$ then
\[ \forall \epsilon > 0: \exists N \in \N: \forall x \in X: \exists n \leq N: d(f^n(x), 0) < \epsilon. \]
\end{lemma}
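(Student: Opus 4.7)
The plan is a standard compactness argument combining pointwise asymptotic nilpotency with continuity of the iterates. I would fix $\epsilon > 0$ and, for each $x \in X$, invoke asymptotic nilpotency to produce a time $n(x) \in \N$ with $d(f^{n(x)}(x), 0) < \epsilon$.

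Next, using that $f$ (hence each iterate $f^{n(x)}$) is continuous — implicit here since the lemma is meant to be applied to cellular automata, which are continuous by definition — I would form the open set $U_x := (f^{n(x)})^{-1}(B_\epsilon(0))$. By construction $x \in U_x$, so $\{U_x\}_{x \in X}$ is an open cover of $X$. Compactness supplies a finite subcover $U_{x_1}, \ldots, U_{x_k}$, and I set $N := \max_{i \leq k} n(x_i)$. Any $y \in X$ lies in some $U_{x_i}$, and then $n := n(x_i) \leq N$ witnesses $d(f^n(y), 0) < \epsilon$, which is exactly the required uniform statement.

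I do not expect a real obstacle: this is the usual \emph{pointwise convergence plus compactness plus continuity yields uniform convergence} recipe, and every ingredient is in place. The only point one must be careful about is the continuity of $f$, which is not spelled out in the statement but is essential to the conclusion — a simple counterexample on the compact metrizable space $\{0\} \cup \{1/n : n \geq 1\}$ with $f(0) = 0$ and $f(1/n) = 1/(n+1)$ shows that without continuity, pointwise asymptotic nilpotency does not upgrade to the syndetic-type uniformity asserted here.
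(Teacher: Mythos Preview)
Your argument is correct. It is the open-cover formulation of the compactness argument, whereas the paper argues via sequential compactness: assuming the conclusion fails for some $\epsilon$, one picks witnesses $x_N$ with $d(f^n(x_N),0)\geq\epsilon$ for all $n\leq N$, passes to a convergent subsequence $x_{N_i}\to x$, and concludes $d(f^n(x),0)\geq\epsilon$ for every $n$, contradicting asymptotic nilpotency. Both routes are equally short, and both silently use continuity of $f$ (the paper in the limit step, you in forming the open sets $U_x$); you are right to flag that the hypothesis is missing from the statement.

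One correction to your side remark: the example you give is \emph{not} a counterexample. On $\{0\}\cup\{1/n:n\geq 1\}$ the map $f(0)=0$, $f(1/n)=1/(n+1)$ is continuous (the only non-isolated point is $0$, and $f(1/n)\to 0=f(0)$), and the conclusion of the lemma does hold for it, with $N$ roughly $1/\epsilon$. A genuine counterexample must allow orbits to linger far from $0$ for an unbounded number of steps before converging; e.g.\ on $[0,1]$ set $f\equiv 0$ on $[0,\tfrac12]$ and let $f$ map a nested sequence of half-open intervals $A_k\subset(\tfrac12,1)$ affinely onto $A_{k-1}$ (with $A_1\mapsto 0$), so that a point in $A_k$ stays above $\tfrac12$ for $k-1$ steps before hitting $0$. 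That $f$ is discontinuous at $\tfrac12$ and violates the conclusion for $\epsilon=\tfrac12$.
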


\begin{proof}
Suppose not, i.e. there exists $\epsilon > 0$ such that
\[ \forall N \in \N: \exists x_N \in X: \forall n \leq N: d(f^n(x), 0) \geq \epsilon. \]
Let $x = \lim_i x_{N_i}$ for some sequence $N_1 < N_2 < N_3 < \cdots$. Then $d(f^n(x), 0) \geq \epsilon$ for all $n \in \N$.
\end{proof}

Associate to a function $f : A^T \to A^T$ a bipartite graph $G_f$ where the left and right vertex sets are copies of $T$, and we have an edge $(t,t')$ if there exist two configurations $x,y \in A^T$ such that $x_u = y_u$ for all $u \neq t$ and $f(x)_{t'} \neq f(y)_{t'}$. Say $f$ \emph{has nice local rules} if the graph $G_f$ is locally finite. This means that $f(x)_t$ only depends on finitely many cells of $x$, and only finitely many cells of $f(x)$ depend on a particular coordinate $x_t$. Observe that having nice local rules implies continuity. 

\begin{lemma}
Suppose $f : A^T \to A^T$ is continuous and commutes with a transitive group action $G \curvearrowright T$, and the stabilizers of elements $t \in T$ in the action of $G$ have finite orbits. Then $f$ nice local rules.
\end{lemma}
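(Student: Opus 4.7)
The plan is to verify local finiteness of $G_f$ on its two sides separately. One side is essentially free from continuity: since $A$ is finite and $A^T$ carries the product topology, for each $t' \in T$ the value $f(x)_{t'}$ depends only on finitely many coordinates of $x$, so the right-neighborhood $\mathcal{M}(t') := \{t \in T : (t,t') \in G_f\}$ is finite.

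The real content is showing that every left-neighborhood $\mathcal{N}(t) := \{t' \in T : (t,t') \in G_f\}$ is finite. Because $f$ is $G$-equivariant, $G_f$ is $G$-invariant, which gives $\mathcal{N}(g \cdot t) = g \cdot \mathcal{N}(t)$ and $\mathcal{M}(g \cdot t) = g \cdot \mathcal{M}(t)$; by transitivity it therefore suffices to treat one basepoint $t_0 \in T$.

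I would then rewrite membership in $\mathcal{N}(t_0)$ using transitivity. Given $t' \in T$, pick $g \in G$ with $g \cdot t_0 = t'$; by $G$-invariance of $G_f$, $(t_0, t') \in G_f$ is equivalent to $(g^{-1} \cdot t_0, t_0) \in G_f$, i.e.\ $g^{-1} \cdot t_0 \in \mathcal{M}(t_0)$. A small check using the fact that $\mathcal{M}(t_0)$ is $G_{t_0}$-invariant (itself a consequence of $G$-invariance of $G_f$) shows this condition is independent of the choice of such $g$. Hence
\[ \mathcal{N}(t_0) \;=\; \bigcup_{u \in \mathcal{M}(t_0)} \{g \cdot t_0 : g \in G, \; g \cdot u = t_0\}. \]
For each $u$, fix $h_u \in G$ with $h_u \cdot u = t_0$; then $\{g : g \cdot u = t_0\}$ is the single coset $h_u G_u$, so the inner set above equals $h_u \cdot (G_u \cdot t_0)$, a translate of the $G_u$-orbit of $t_0$.

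Now the hypothesis closes the argument: since $G_u$ is the stabilizer of a point and stabilizers have finite orbits, $G_u \cdot t_0$ is finite, so each set $h_u \cdot (G_u \cdot t_0)$ is finite, and $\mathcal{N}(t_0)$ is a finite union (over the finite set $\mathcal{M}(t_0)$) of finite sets. The only step that requires a moment of thought is setting up the orbit description of $\mathcal{N}(t_0)$ via the chosen $g$'s and checking well-definedness; once that is in place the hypothesis is used as a one-line appeal.
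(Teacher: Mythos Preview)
Your argument is correct. Both your proof and the paper's establish finiteness of the right-neighborhood $\mathcal{M}(t')$ from continuity and then use $G$-equivariance to relate the two neighborhood types, but the organization differs. The paper argues by contradiction: assuming some $t$ lies in infinitely many minimal neighborhoods $N(t')$, it uses pigeonhole on the finite set $N(t')$ to find a fixed $t''$ and infinitely many $g_i$ with $g_i t'' = t$, and then observes that the group generated by the $g_i g_j^{-1}$ stabilizes $t$ yet moves $g_1 t'$ through infinitely many points, contradicting the hypothesis. You instead give a direct description
\[
\mathcal{N}(t_0) = \bigcup_{u \in \mathcal{M}(t_0)} h_u \cdot (G_u \cdot t_0),
\]
which immediately exhibits $\mathcal{N}(t_0)$ as a finite union of translates of stabilizer orbits. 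Your route is slightly more explicit, yielding the quantitative bound $|\mathcal{N}(t_0)| \leq \sum_{u \in \mathcal{M}(t_0)} |G_u \cdot t_0|$; the paper's contradiction argument is shorter to write but hides this structure. The well-definedness remark you include is correct but, as you essentially note, not needed once the union formula is in place.
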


\begin{proof}
Since $f$ is continuous, every $t \in T$ has a unique \emph{minimal neighborhood}, a finite subset $N(t) \subset T$ such that $f(x)_t$ only depends on $x|_{N(t)}$, and $N(t)$ is minimal with this property. Since $f$ commutes with $G$, by the uniqueness of the minimal neighborhood we have $gt = t' \implies gN(t) = N(t')$. It is clear that the graph described in the definition of nice local rules simply records the edges from $N(t)$ to $t$ for all $t$. Since $f$ is continuous, the only way for it not to have nice local rules, is that there exists some $t \in T$ such that $t \in N(t')$ for infinitely many $t'$.

Since $G$ acts transitively, we can take any such $t' \in T$ and find infinitely many elements of $G$, say $g_1, g_2, ...$, such that $t \in N(g_i t') = g_i N(t')$, such that all $g_i t'$ are distinct. By the pigeonhole principle, there is some $t'' \in N(t')$ such that $g_i t'' = t$ for infinitely many $g_i$. Now observe that for all $i, j$, the translation $g_i g_j^{-1} \in G$ fixes $t$, but obviously $g_1 t'$ has infinite orbit under the action of the group $\langle g_i g_j^{-1} \rangle$, meaning the stabilizer of $t$ does not have finite orbits.
\end{proof}

Transitivity of the action does not suffice in the previous statement, as shown by the following example. A similar example was constructed by T\"orm\"a (private communication).

\begin{example}
Consider the set $X_0 \times \Z$ where $X_0 \subset \Z^\N$ is the set of sequences with finite support, seen as an infinite tree so the parent of $(x,n)$ is $(\sigma(x),n+1)$ and the children are $(kx,n-1)$, $k \in \Z$. The free group $\langle a, b \rangle$ acts by shifting the main branch, by $a \cdot (x,n) = (x, n+1)$, and permuting (by $\Z$-translation) the branch at a fixed level, by $b \cdot (x,n) = (y,n)$ where $y_{-n} = x_{-n}+1$ if $n \geq 0$ and $y_i = x_i$ for all $i \in \N \setminus \{-n\}$. This action is transitive, and the map $f(\gamma)_{(x,n)} = \gamma_{(\sigma(x),n+1)}$, which copies the value from the parent to the children, commutes with the action of $\langle a,b \rangle$, is continuous, yet does not have nice local rules.
\end{example}

We now prove Lemma~\ref{lem:FiniteMortalNilpotent} which says that asymptotic nilpotency implies nilpotency if there are lots of finite mortal configurations. A more subtle version of this argument can be found in \cite{GuRi10}. The \emph{support (w.r.t.\ $0 \in A$)} of a configuration $x \in A^T$ is $\supp(x) = \{t \in T \;|\; x_t \neq 0\}$. We say $x$ is \emph{finite} if its support is.

\begin{lemma}
\label{lem:FiniteMortalNilpotent}
Let $T$ be a set and suppose $f : A^T \to A^T$ have nice local rules. If $f$ is asymptotically nilpotent towards $0^T$ and finite mortal configurations are dense in $X$, then $f$ is uniformly asymptotically nilpotent.
\end{lemma}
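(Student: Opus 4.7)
My plan is a proof by contradiction, reducing the problem to a fixed finite window $W \subset T$: since the cylinders $\{y \in A^T : y|_W = 0^W\}$ for finite $W$ form a neighborhood basis of $0^T$, uniform asymptotic nilpotency is equivalent to showing, for each such $W$, that there is some $M_W$ with $f^n(x)|_W = 0^W$ for all $x$ and all $n \geq M_W$. Assume for contradiction that this fails for some $W$. Then the open set $B_n = \{y : f^n(y)|_W \neq 0^W\}$ is nonempty for arbitrarily large $n$.

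By nice local rules, $f^n(y)|_W$ depends only on $y|_{V_n}$ for some finite $V_n \subset T$, so each $B_n$ is a finite union of cylinders, hence clopen. Pick an enumeration $n_1 < n_2 < \cdots$ of indices with $B_{n_k} \neq \emptyset$. Using density of finite mortal configurations, for each $k$ choose a finite mortal $y_k \in B_{n_k}$; its mortality time necessarily exceeds $n_k$ and tends to infinity. The aim is then to use compactness of $A^T$ together with a diagonal procedure to extract a subsequence $(y_{k_j})$ and a limit configuration $y^* \in A^T$ such that $y_{k_j}|_{V_{n_{k_j}}} = y^*|_{V_{n_{k_j}}}$ for all $j$. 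Given this, nice local rules yield $f^{n_{k_j}}(y^*)|_W = f^{n_{k_j}}(y_{k_j})|_W \neq 0^W$, so $y^*$ visits $B_n$ for infinitely many $n$ and hence $f^n(y^*) \not\to 0^T$, contradicting asymptotic nilpotency.

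The main obstacle is this coordinated extraction: plain pointwise convergence $y_k \to y^*$ only gives agreement on each fixed finite set, whereas the sets $V_{n_k}$ grow with $k$. To handle it, I would build $y^*$ and the subsequence together stage by stage, using finite pigeonhole in $A^V$ at each stage to preserve an infinite reservoir of candidate indices compatible with the partial $y^*$, and using the flexibility granted by density of mortals to re-select each mortal $y_{k_j}$ on cells outside those already committed so as to match $y^*$. An equivalent K\"onig's-lemma formulation runs on the finitely-branching tree of non-safe cylinders — cylinders meeting $B_n$ for arbitrarily large $n$ — and uses that a cylinder all of whose one-cell refinements are safe is itself safe, so if the whole space is not safe the tree must have an infinite branch, yielding the desired $y^*$. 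This compactness-and-refinement step is the subtle heart of the argument, echoing the Guillon-Richard proof.
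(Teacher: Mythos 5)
There is a genuine gap, and it sits exactly where you locate the ``subtle heart'' of your argument: producing a single configuration $y^*$ lying in $B_{n}$ for infinitely many $n$. Your K\"onig's-lemma scheme only yields a configuration all of whose finite cylinder neighbourhoods are non-safe (meet $B_n$ for arbitrarily large $n$), and this does \emph{not} imply $y^* \in \limsup_n B_n$. Concretely, take clopen sets $B_k = \{x : x_0 = \dots = x_{k-1} = 0,\ x_k = 1\}$: every $B_k$ is nonempty, every all-zero cylinder is non-safe, the K\"onig branch produces $y^* = 0^T$, yet no configuration lies in more than one $B_k$, so $\limsup_k B_k = \emptyset$. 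Asymptotic nilpotency is precisely the statement that $\limsup_n B_n = \emptyset$ for every window $W$, so the situation ``all $B_{n_k}$ nonempty but limsup empty'' is exactly the scenario you must rule out, and no purely topological compactness/pigeonhole/refinement argument can rule it out. The proposed repair --- re-selecting the mortal $y_{k_j}$ to agree with $y^*$ on the committed cells --- does not help, because agreement is then only on the committed finite set, not on all of $V_{n_{k_j}}$, which keeps growing; so the crucial identity $f^{n_{k_j}}(y^*)|_W = f^{n_{k_j}}(y_{k_j})|_W$ is never secured. A telling symptom is that your argument never actually uses mortality (only density, and finite configurations are always dense in $A^T$); if it worked, it would show that asymptotic nilpotency implies uniform asymptotic nilpotency for \emph{every} map with nice local rules on a full shift, which would trivialize Theorem~\ref{thm:ZNilrigid} and the open questions in Section~4. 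The mortality hypothesis must enter essentially.

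The paper's proof uses a different mechanism, and the mortal configurations are the engine. By Baire category (as in \cite{GuRi10}) there is an open set $U$ and a time $m$ such that $f^k(x)_t = 0$ for all $x \in U$ and $k \geq m$; by density one picks a \emph{finite mortal} $y \in U$, dying at some time $m'$. If uniformity fails, Lemma~\ref{lem:SyndeticallyClose} lets one choose, for any large finite $B$, arbitrarily large $k$ and a configuration $x$ with $x|_B = 0^B$ but $f^k(x)_t \neq 0$. Taking $B$ large enough, the superposition $x + y$ lies in $U$, and --- here is where mortality is indispensable --- since $y$ vanishes after $m'$ steps and nice local rules bound the interaction range over those finitely many steps, $f^n(x+y) = f^n(x) + f^n(y)$ for \emph{all} $n$, so $f^k(x+y)_t = f^k(x)_t \neq 0$ with $k \geq m$, contradicting $x + y \in U$. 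Your outline contains no analogue of this gluing step, and without it (or some substitute exploiting mortality) the contradiction cannot be reached.
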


\begin{proof}
Since $f$ is asymptotically nilpotent, by the Baire category theorem, for some $m$ there exists an open set $U$ such that every configuration $x \in U$ satisfies $f^{k}(x)_{t} = 0$ for all $k \geq m$ (see \cite{GuRi10}.) Let $y \in U$ be a finite mortal configuration, and suppose $f^{m'}(y) = 0^T$.

Suppose $f$ is not uniformly asymptotically nilpotent. Then for arbitrarily large $k \in \N$, there exists $x \in X$ such that $f^k(x)_t \neq 0$ (since $f$ is asymptotically nilpotent this is true for some $t' \in T$, and we can conjugate by a translation to have it true for $t$). By the previous lemma, due to asymptotic nilpotency for any finite set $B \subset T$ there exists $N \in \N$ such that for any $x \in A^T$ there exists $n \leq N$ such that $f^n(x)|_B = 0^B$. Combining these facts, for any finite $B \subset T$ there exist arbitrarily large $k$ and $x \in A^T$ such that $x|_B = 0^B$ and $f^k(x)_t \neq 0$.

Since $f$ has nice local rules, if $B$ is large enough, for the configuration $y$ chosen as in the first paragraph and $x$ chosen as in the previous paragraph, we have $f^n(x + y) = f^n(x) + f^n(y)$ for all $n \in \N$. Namely, since $y$ is mortal, if this holds for all $n < m'$, it holds for all $n \in \N$. Now simply pick $B$ large enough: $f$ has nice local rules, clearly there is no interaction between $x$ and $y$ in the first $m'$ steps. Now, observe that if $x$ is picked as in the previous paragraph so that $f^{k}(x)_{t} \neq 0$ for some $k \geq m$, then if $B$ is large enough we have $x + y \in U$ and $f^k(x + y)_t = f^k(x)_t + f^k(y)_t = f^k(x)_t \neq 0$, a contradiction.
\end{proof}

We say the \emph{support of $x \in A^T$ stays uniformly bounded} if there exists a finite set $S \subset T$ such that $\supp(f^n(x)) \subset S$ for all $n \in \N$. The following lemma is an obvious combination of Lemma~\ref{lem:SyndeticallyClose} and Lemma~\ref{lem:FiniteMortalNilpotent}.

\begin{lemma}
Let $T$ be a set and suppose $f : A^T \to A^T$ have nice local rules. If $f$ is asymptotically nilpotent towards $0^T$ and every the support of every finite configuration stays uniformly bounded in the iteration of $f$, $f$ is uniformly asymptotically nilpotent.
\end{lemma}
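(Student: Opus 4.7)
The plan is to verify the hypotheses of Lemma~\ref{lem:FiniteMortalNilpotent}, which in this setting reduces to showing that finite mortal configurations are dense in $A^T$.

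First, I note that finite configurations are themselves dense: given any $y \in A^T$ and any basic open neighborhood of $y$ specified by agreement on a finite set $F \subset T$, the configuration which equals $y$ on $F$ and $0$ elsewhere has finite support and lies in the neighborhood. So it suffices to prove that every finite configuration is mortal.

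Second, let $x$ be a finite configuration. By hypothesis there is a finite set $S \subset T$ with $\supp(f^n(x)) \subseteq S$ for all $n \in \N$. Asymptotic nilpotency gives $f^n(x)_t \to 0$ for every $t \in S$, and since both $S$ and $A$ are finite, pointwise convergence of $f^n(x)|_S$ to $0^S$ forces $f^{n_0}(x)|_S = 0^S$ for some $n_0$. Combined with $\supp(f^{n_0}(x)) \subseteq S$, this yields $f^{n_0}(x) = 0^T$, so $x$ is mortal. One can equivalently invoke Lemma~\ref{lem:SyndeticallyClose} with $\epsilon$ small enough that $d(\cdot, 0^T) < \epsilon$ forces agreement with $0^T$ on $S$, which matches the description of the lemma as a combination of Lemmas~\ref{lem:SyndeticallyClose} and \ref{lem:FiniteMortalNilpotent} and additionally bounds $n_0$ uniformly in $x$ (given $S$).

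Combining the two steps, the finite mortal configurations are dense, and Lemma~\ref{lem:FiniteMortalNilpotent} immediately delivers uniform asymptotic nilpotency. There is no real obstacle: the content is essentially a bookkeeping combination of the two cited lemmas with the trivial density of finite configurations.
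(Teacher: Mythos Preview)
Your proposal is correct and is precisely the argument the paper has in mind: the paper gives no proof beyond calling the lemma ``an obvious combination of Lemma~\ref{lem:SyndeticallyClose} and Lemma~\ref{lem:FiniteMortalNilpotent}'', and you have spelled out that combination---bounded support plus asymptotic nilpotency forces each finite configuration to be mortal, so finite mortal configurations are dense and Lemma~\ref{lem:FiniteMortalNilpotent} applies. Your direct mortality argument actually avoids Lemma~\ref{lem:SyndeticallyClose} altogether (finiteness of $S$ and discreteness of $A$ suffice), so invoking it is optional, as you note.
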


We also recall the following result of Guillon and Richard.

\begin{theorem}[\cite{GuRi10}]
\label{thm:ZNilrigid}
The group $\Z$ is nilrigid.
\end{theorem}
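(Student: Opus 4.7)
The plan is to apply Lemma~\ref{lem:FiniteMortalNilpotent}: since $f$ commutes with the shift and is continuous, it has nice local rules (by the earlier lemma on transitive actions with finite-orbit stabilizers, or by Curtis--Hedlund--Lyndon), so it suffices to prove that finite mortal configurations are dense in $A^\Z$.

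A standard Baire category argument applied to the closed sets $F_n = \bigcap_{k \geq n}\{x \in A^\Z : f^k(x)_0 = 0\}$, which cover $A^\Z$ by asymptotic nilpotency, yields a cylinder $[u]_I \subseteq F_N$ for some $N$, some finite word $u$, and some finite interval $I$. Shift-invariance upgrades this to: $f^k(x)_s = 0$ whenever $x|_{I+s} = u$ and $k \geq N$. From here one deduces that every periodic configuration is mortal, since the forward orbit of a period-$p$ configuration $q$ under $f$ lies in the finite set of period-$p$ configurations, and asymptotic nilpotency forces this finite orbit through $0^\Z$.

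To produce finite mortal configurations densely, given any cylinder $[v]_J$ I would pick a word $w$ containing $v$ and several well-spaced copies of $u$, so the periodic configuration $q$ obtained by tiling $\Z$ with copies of $w$ lies in $[v]_J$ (up to a shift); by the previous paragraph $q$ is mortal, say $f^{N'}(q) = 0^\Z$. The truncation $y_M$ of $q$ to a large window $[-M, M]$ (with zeros outside) lies in $[v]_J$, is finite, and by locality with radius $r$ the iterate $f^{N'}(y_M)$ agrees with $0^\Z$ on $[-M + rN', M - rN']$, so its support is confined to two bounded intervals near the truncation boundaries. The main obstacle is killing these two boundary bumps in finite time; for $M$ large they evolve independently, and each is a one-sided mortality problem for a configuration eventually equal to $0^\Z$ on one side and eventually equal to $q$ on the other. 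The cleanest route I see is a second Baire category argument applied to the closed, shift-equivariant family of such eventually-$q$/eventually-$0$ configurations, which yields its own mortality cylinder for one-sided patterns; combined with the original periodic mortality, this forces $y_M$ to be mortal, giving density of finite mortal configurations and, via Lemma~\ref{lem:FiniteMortalNilpotent}, uniform asymptotic nilpotency (equivalently, nilpotency, since the action is transitive).
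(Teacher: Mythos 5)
First, note that the paper does not prove this statement at all: it is quoted as an external result of Guillon and Richard \cite{GuRi10}, and the paper's Lemma~\ref{lem:FiniteMortalNilpotent} is explicitly described as a cruder version of an argument whose ``more subtle version'' appears in \cite{GuRi10}. So your proposal has to stand on its own, and it does not: it has a genuine gap at exactly the step that constitutes the actual content of the theorem. Your first three steps (nice local rules via Curtis--Hedlund--Lyndon, the Baire category cylinder $[u]_I \subseteq F_N$, and mortality of periodic configurations via finiteness of the set of $p$-periodic points) are correct but standard and easy. The difficulty is precisely the one you flag and then wave away: after truncating a mortal periodic configuration $q$, nothing so far prevents the boundary ``bumps'' from surviving forever while drifting off to infinity, which is perfectly compatible with pointwise convergence $f^n(x) \to 0^\Z$. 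Ruling out this escaping-glider behaviour is the heart of \cite{GuRi10}, and it is done there by a delicate construction (placing seeds so that surviving nonzero residue would have to revisit a fixed cell infinitely often, contradicting asymptotic nilpotency), not by a soft category argument.

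Concretely, your ``second Baire category argument'' fails on its premises. The family of configurations eventually equal to $q$ on one side and eventually $0$ on the other is not closed in $A^\Z$ (its closure is essentially everything, since the middle segment is unconstrained and the transition points can escape to $\pm\infty$), and as a subspace it is a countable set without isolated points, hence not a Baire space; applying Baire to its closure just reproduces the original cylinder and gives no new information. Moreover, even if you did obtain a ``mortality cylinder'' for one-sided patterns, it would only force $f^k(x)_s = 0$ at the specific cells $s$ where the fixed word occurs, whereas the evolving boundary bump is arbitrary garbage that need not contain any prescribed word; killing it entirely is equivalent to showing that an arbitrary finite (or half-infinite) configuration is mortal, which is what you were trying to prove in the first place. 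There is also a minor circularity in the independence claim: the two bumps evolve independently only on a time scale governed by their death times, which you do not yet control (this particular issue is repairable, since the bump pattern is independent of $M$ up to shift, but it deserves to be said). In short, the reduction via Lemma~\ref{lem:FiniteMortalNilpotent} is a reasonable frame, but the proposal does not prove the key mortality statement and cannot be completed by the sketched means; the paper itself simply imports this theorem from \cite{GuRi10}.
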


\section{The theorem}

\begin{theorem}
The action $G_k \curvearrowright T_k$ of the automorphism group of the $k$-regular tree $T_k$ is nilrigid.
\end{theorem}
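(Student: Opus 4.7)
The plan is to apply the last (unnumbered) lemma of Section~2: it suffices to show that for every finite configuration $y$, the supports $\supp(f^n(y))$ stay inside a single finite subset of $T_k$, independent of $n$. The second lemma of Section~2 applies because $G_k$ acts transitively on $T_k$ and the stabilizer of any vertex acts transitively on each (finite) sphere about it, so $f$ has nice local rules, say of radius $r$. (Asymptotic nilpotency also forces $f(0^{T_k}) = 0^{T_k}$, since constant configurations are $f$-invariant and an asymptotically nilpotent self-map of a finite set must fix $0$.)

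The key reduction is to a $\Z$-CA. Fix an end $\xi$ of $T_k$ and let $h_\xi : T_k \to \Z$ be its Busemann function, whose level sets are the horospheres. Configurations constant on horospheres, $x_t = b_{h_\xi(t)}$ for $b \in A^\Z$, are exactly the fixed points of the subgroup of $G_k$ that fixes $\xi$ and every horosphere setwise, and this subgroup acts transitively on each horosphere (swap the subtrees hanging off the first common ancestor of two equi-level vertices on their rays to $\xi$). These configurations form a closed $f$-invariant subspace with an induced map $F_\xi : A^\Z \to A^\Z$. Moreover $T_k$ admits an automorphism shifting $h_\xi$ by $1$ (translate any bi-infinite geodesic with $\xi$ as one end, extending arbitrarily to the rest of the tree), so $F_\xi$ commutes with the $\Z$-shift; and $|h_\xi(u) - h_\xi(t)| \leq d(u, t)$ makes its local rule of radius $r$. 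So $F_\xi$ is a $\Z$-CA, asymptotically nilpotent because $f$ is, and Theorem~\ref{thm:ZNilrigid} gives some $M$ with $F_\xi^M \equiv 0^\Z$.

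Now fix a finite $y$ with support $S$ and (finite) convex hull $C = C(S) \subset T_k$. Any automorphism of $T_k$ fixing $C$ pointwise fixes $y$ and hence every $f^n(y)$, and such automorphisms act transitively, at each $v^* \in C$, on the vertices at each fixed distance $d \geq 1$ from $v^*$ inside the free branches at $v^*$ (the components of $T_k \setminus \{v^*\}$ disjoint from $C \setminus \{v^*\}$). So $f^n(y)$ restricted to the free branches at $v^*$ is encoded by a single radial profile $a^{(v^*, n)} \in A^{\N_{\geq 1}}$. The key geometric observation is that a vertex $t$ at depth $d \geq r+1$ from $v^*$ in a free branch has $B(t, r)$ contained entirely in that branch, with precisely the local structure governing $F_\xi$'s local rule (one neighbor toward $v^*$ at depth $d-1$, $k-1$ away at depth $d+1$, iterated). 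Thus for $d \geq r+1$ the profile evolves by $F_\xi$'s local rule, and a short induction comparing $a^{(v^*, n+j)}_d$ with $F_\xi^j$ applied to any $\Z$-extension of $a^{(v^*, n)}$ shows $a^{(v^*, n+M)}_d = 0$ for all $d \geq Mr + 1$. Since $\supp(y) \subset C$, the initial profile $a^{(v^*, 0)}$ is identically zero, so combined with the crude ``speed $r$ per step'' bound $\supp(a^{(v^*, n)}) \subset [1, nr]$ I obtain $\supp(a^{(v^*, n)}) \subset [1, Mr]$ uniformly in $n$.

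Thus $\supp(f^n(y))$ sits inside the union of $C$ with the depth-$Mr$ neighborhoods of each $v^* \in C$ in its free branches -- a finite set independent of $n$ -- and the bounded-support lemma concludes. The main technical obstacle is the third paragraph: setting up $F_\xi$ as a $\Z$-CA (in particular its shift-commutation via the level-shift automorphism) and cleanly identifying the branch-profile dynamics far from $v^*$ with those of $F_\xi$, so that $F_\xi^M \equiv 0^\Z$ transfers into a uniform support bound on each profile.
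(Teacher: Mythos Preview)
Your proof is correct and follows essentially the same route as the paper: reduce to a $\Z$-CA via horospherical configurations and apply Theorem~\ref{thm:ZNilrigid}, then use the radial symmetry of branches outside the convex hull of a finite support to transfer the $\Z$-nilpotency into a uniform support bound. The only differences are cosmetic---you look at free branches at every vertex of $C$ rather than only at its leaves, and you combine $F_\xi^M\equiv 0$ with a speed-$r$ bound instead of first replacing $f$ by $f^n$---but the underlying argument is the same.
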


\begin{proof}
Write $G = G_k$, $T = T_k$. Suppose $f$ is a CA on $A^{T}$ and $0 \in A$, and $f^n(x) \rightarrow 0^{T}$ for all $x \in A^{T}$. We need to show that for some $n \in \N$, we have $f^n(x) = 0^{T}$ for all $x \in A^{T}$. Write $d : T \times T \to \R$ for the path metric in $T_k$.

Pick a bi-infinite injective path $p : \Z \to T$, i.e.\ for all $n \in \Z$, $\{p(n), p(n+1)\}$ is an edge in $T$ and $p(n) \neq p(n+2)$. We stratify $T$ according to distance from ``the end $p(-\infty)$'', normalized so $p(0)$ is at level $0$. More precisely, we define the Busemann function $b : T \to \Z$ by 
\[ b(t) = \lim_{n \rightarrow \infty} d(p(-n), t) - n. \]

Define $Y \subset A^T$ by $y \in Y \iff \forall t, t' \in T: b(t) = b(t') \implies y_t = y_{t'}$. Thinking of the tree as growing upward from the end $p(-\infty)$, this means the colors of vertices on the same height have the same color. Define a bijection $\phi : A^\Z \to Y$ by $\phi(x)_t = x_{b(t)}$. We have $f(Y) \subset Y$: if $b(t) = b(t')$ pick a large enough $n$ so $t$ and $t'$ are both further from $p(-\infty)$ than $p(-n)$, i.e.\ the limits defining $b$ has been reached. Pick an automorphism $g : T \to T$ that fixes $p(-n')$ for all $n \geq n'$ and maps $t$ to $t'$. Observe that $g$ fixes every configuration $y \in Y$, so for all $y \in Y$ we have
\[ f(y)_t = f(y)_{g^{-1} t'} = gf(y)_{t'} = fg(y)_{t'} = f(y)_{t'}. \]
This is simply the observation that an endomorphism cannot make the stabilizer subgroup of a configuration smaller.

Note that $\phi^{-1}(y)_i = y_{p(i)}$ for $y \in Y$. Observe that $\phi \circ \sigma \circ \phi^{-1} : Y \to Y$ is induced by any automorphism $g$ of the tree mapping $p(n)$ to $p(n-1)$ for all $n \in \Z$. Similarly, $x \mapsto \phi^{-1}f\phi(x)$ defines a function $\bar f : A^\Z \to A^\Z$, and because $f$ commutes with $g$, $\bar f$ commutes with $\sigma = \phi^{-1} \circ g \circ \phi$. Since $\bar f$ is obviously continuous, it is a cellular automaton.

Since $f$ is asymptotically nilpotent, it is asymptotically nilpotent on $Y$, and it is easy to see that $\bar f$ is then asymptotically nilpotent as well. It follows from Theorem~\ref{thm:ZNilrigid} that for some $n \in \N$, $\bar f^n(x) = 0^T$ for all $x \in X$. Thus $f^n(y) = 0^T$ for all $y \in Y$.

Clearly $f^n$ is (uniformly) asymptotically nilpotent if and only if $f$ is, so we can w.l.o.g. replace $f$ by $f^n$. So suppose $f(y) = 0^T$ for all $y \in Y$. It follows that for any $t \in T$, for some $\epsilon > 0$, whenever we have $d(x, y) < \epsilon$ for some $y \in Y$, then $f(x)_t = 0$.

By Lemma~\ref{lem:SyndeticallyClose}, it is enough to show that the support of any finite configuration $x$ stays uniformly bounded. It is easy to see that if $x$ and $f(x)$ both have support of size one, then their supports are equal. Thus it is enough to show that finite configurations wth finite support of size at least $2$ stay uniformly bounded. Let thus $x$ be finite with support size at least $2$ and $S' \subset T$ its finite support. Let $S \subset T$ be the ``convex hull'' of $S'$, i.e.\ the smallest set containing $S'$ and the shortest path between any pair $s, s' \in S'$.

Consider $S$ as an induced subgraph of $T$, and let $F \subset S$ be its set of leaves. For each $u \in F$, let $B_u$ be the connected component starting from $u$ outside $S \setminus F$, i.e.\
\[ B_u = \{t \in (T \setminus S) \cup \{u\} \;|\; \forall u' \in F \setminus \{u\}: d(t, u) < d(t, u'). \]
Define a map $c : B_u \to \N$ by $c(u) = 0$ and in general $c(t) = d_T(t, u)$.

Let $Z_u$ be the set of configurations $z$ where the colors in the component $B_u$ only depend on the value of $c(t)$, i.e.
\[ z \in Z_u \iff (\forall t, t' \in B_u: c(t) = c(t') \implies z_g = z_h). \]
Let $Z = \bigcap_{u \in F} Z_u$. We claim that $f(Z) \subset Z$. For this, it is enough to show $f(Z_u) \subset Z_u$ for all $u \in F$. For this, observe that the subgroup of $G_k$ that stabilizes $T \setminus B_u$ acts transitively on $c^{-1}(n)$ for all $n \geq 0$. Thus $f(Z_u) \subset Z_u$ is proved by a similar formula as $f(Y) \subset Y$.

Suppose then $z \in Z$. We claim that there exists $n$ such that whenever $d(t, u) \geq n$, then $f(z)_t = 0$. To see this, it is enough to show that for all $u \in F$ there exists $n$ such that whenever $z \in Z_u$, $t \in B_u$ and $d(t, u) \geq n$, we have $f(z)_t = 0$.

To prove this, consider one such pair $d(t, u) = m \geq n$. Consider any automorphism $g \in G_k$ that maps $u$ to $p(-m)$ and $t$ to $p(0)$, and in general if $t' \in B_u$ and $c(t') = i$, maps $t'$ into $b^{-1}(i - m)$ (so $gB_u = b^{-1}([-m,\infty))$.) We can find a configuration $y \in Y$ which agrees with $g z$ in every cell at distance at most $m$ from $p(0)$: for any $q : (-\infty, -m-1] \to A$ there is a unique configuration $y \in Y$ that satisfies $y|_{gB_u} = gz|_{gB_u}$ and $y_|{p(i)} = q(i)$ for $i < -m$. If $n \leq m$ is large enough, we then have $f(gz)_{p(0)} = f(y)_{p(0)} = 0$, so
\[ 0 = f(gz)_{p(0)} = f(z)_{g^{-1} p(0)} = f(z)_t. \]

Now, we see that $z \in Z$ implies that after an application of $f$, every nonzero symbol is at distance at most $n$ from $S$ along the tree. Now, if the support of $x$ is originally contained in $S$, then obviously $x \in Z$, so for all $n$ the support of $f^n(x)$ stays within distance $n$ from $S$.
\end{proof}

\section{Questions}

The infinite $k$-regular tree is a Cayley graph for $k \geq 2$: for any $k$, it is the Cayley graph of $k$ copies of the two-element group $\Z_2$, and for even $k$, it is the Cayley graph of the free group $F_{k/2}$ with respect to a free generating set of size $k/2$.

\begin{question}
Does $T_k$ admit a simply transitive nilrigid group action by graph automorphisms?
\end{question}

The related question of nilrigidity $F_k$ was asked in \cite{SaTo18}. We also repeat \cite[Question~11.8]{SaTo18}:

\begin{question}
Is every transitive group action nilrigid?
\end{question}

One can also formulate the question for monoid actions (the opposite monoid acts naturally on configurations). We do not know whether all transitive monoid actions are nilrigid.

\bibliographystyle{plain}
\bibliography{../../../bib/bib}{}

\end{document}